\providecommand{\noopsort}[1]{}
\newtheorem{thm}{Theorem}
\newtheorem{Lem}{Lemma}
\title{Primes between consecutive powers}
\author{Michaela Cully-Hugill\\ School of Science\\ University of New South Wales Canberra \\m.cully-hugill@adfa.edu.au}
\begin{document}
\maketitle
\begin{abstract}
\noindent 
This paper updates the explicit interval estimate for primes between consecutive powers. It is shown that there is least one prime between $n^{155}$ and $(n+1)^{155}$ for all $n\geq 1$. This result is in part obtained with a new explicit version of Goldston's 1983 estimate for the error in the truncated Riemann--von Mangoldt explicit formula. \end{abstract}

\section{Primes in intervals}

There are a variety of results on primes in intervals of the form $(x,x+f(x)]$, for some $f(x) < x$. These results typically hold for sufficiently large $x$, and some explicitly calculate the range of $x$ for which they hold. The latter is particularly useful for bounding gaps between large primes. We have computation on gaps between primes up to $4\cdot 10^{18}$ \cite{O_H_P_14}, so explicit interval estimates are intrumental above this. One of the smallest explicit interval results is from Dudek \cite{Dudek_16p}, with primes between consecutive cubes for all $n\geq \exp(\exp(33.3))$. It appears difficult to substantially extend this range, so we can instead look at results for higher powers. The purpose of this paper is to reduce the $m$ for which we know $(n^m,(n+1)^m)$ contains a prime for all $n\geq 1$. Dudek \cite{Dudek_16p} showed that we can take $m=5\cdot 10^9$, and Mattner \cite{Mattner_17} lowered this to $m=1.5\cdot 10^6$. This is improved to the following.
\begin{thm}\label{powers}
There exists at least one prime in the interval $(n^{155},(n+1)^{155})$ for all $n\geq 1$.
\end{thm}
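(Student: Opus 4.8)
The plan is to reduce Theorem~\ref{powers} to a short-interval estimate for the Chebyshev function $\psi(x)=\sum_{p^{k}\le x}\log p$ and to dispose of the remaining small $n$ with existing explicit results. Writing $x=n^{155}$ and $h=(n+1)^{155}-n^{155}\ge 155\,n^{154}=155\,x^{154/155}$, and noting that neither $n^{155}$ nor $(n+1)^{155}$ is prime for $n\ge 1$, it suffices to prove $\psi(x+h)-\psi(x)>c\sqrt{x+h}$ for some fixed $c>1$ and all $n\ge n_{0}$: the prime powers $p^{k}$ with $k\ge 2$ contribute at most $(1+o(1))\sqrt{x+h}$ to $\psi(x+h)-\psi(x)$, so such a bound forces a genuine prime into $(n^{155},(n+1)^{155})$. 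For $1\le n<n_{0}$ I would appeal to a Dusart-type interval estimate --- there is a prime in $(y,\,y(1+1/(C(\log y)^{2}))]$ for $y\ge y_{1}$ --- which disposes of all $n$ with $155^{3}C(\log n)^{2}\gtrsim n$, hence all $n$ up to roughly $10^{10}$, together with the trivial case $n=1$. The crux of the whole argument will be that $n_{0}$ lands comfortably inside this range.

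For the main range I would begin from the truncated Riemann--von Mangoldt explicit formula
\[
\psi(x+h)-\psi(x)=h-\sum_{|\gamma|\le T}\frac{(x+h)^{\rho}-x^{\rho}}{\rho}+E(x,h,T),
\]
bound $|E(x,h,T)|$ by the new explicit version of Goldston's estimate, and take $T=x^{g}$ for a fixed exponent $g>1/155$, the precise value to be optimised. With this choice $|E(x,h,T)|\ll x^{1-g}(\log x)^{2}=o(h)$, so the whole problem reduces to showing that $\bigl|\sum_{|\gamma|\le T}\bigl((x+h)^{\rho}-x^{\rho}\bigr)/\rho\bigr|<\tfrac12 h$ for all $x\ge x_{0}$.

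For the sum over zeros I would use $\bigl|\tfrac{(x+h)^{\rho}-x^{\rho}}{\rho}\bigr|\le\min\!\bigl(hx^{\beta-1},\,2x^{\beta}/|\gamma|\bigr)$ and split at the height $T_{0}$ up to which the Riemann hypothesis has been verified numerically. For $|\gamma|\le T_{0}$ each zero has $\beta=\tfrac12$, so that part is $O\bigl(\sqrt{x}\,(\log T_{0})^{2}\bigr)=o(h)$. For $T_{0}<|\gamma|\le T$ I would combine two ingredients: an explicit zero-free region $\beta\le 1-1/(R_{0}\log|\gamma|)$, which forces any zero with $\beta\ge\sigma$ to have $|\gamma|>\exp\!\bigl(1/(R_{0}(1-\sigma))\bigr)$ and hence $1/|\gamma|$ correspondingly small, and an explicit zero-density estimate $N(\sigma,T)\ll T^{A(1-\sigma)}(\log T)^{B}$ with $A$ far below $155$. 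Expanding $x^{\beta}$ as an integral in $\sigma$ and partial-summing against $N(\sigma,T)$, these should together yield $\sum_{T_{0}<|\gamma|\le T}x^{\beta}/|\gamma|\ll x^{1-g}(\log x)^{B+1}$, which is $<\tfrac12 h$ once $x$ is large enough --- precisely because $g>1/155$. This gives a threshold $x_{0}$, and one then puts $n_{0}=\lceil x_{0}^{1/155}\rceil$.

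The hard part is entirely quantitative: making $x_{0}$, equivalently $n_{0}$, small enough that the small-$n$ range is genuinely covered by the available interval estimates. The verification height $T_{0}$, the zero-free-region constant $R_{0}$, the exponents $A,B$ in the density estimate, and above all the constant in the new Goldston bound all feed into the optimisation of $g$ and of $x_{0}$, and it is the sharpening of Goldston's estimate that does the decisive work: a smaller error $E(x,h,T)$ relaxes the constraint on $T$ and lowers the threshold $x_{0}$ at which the scheme closes, and it is this reduction that brings $n_{0}$ down into the range that the small-$n$ results can handle.
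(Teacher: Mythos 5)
Your overall architecture mirrors the paper's: the truncated Riemann--von Mangoldt formula with an explicit Goldston-type error, a choice $T=x^{g}$ with $g$ just above $1/155$, and a decomposition of the zero sum by combining the verified height, an explicit zero-density estimate, and a zero-free region. Cosmetic differences aside --- the paper works with $\theta$ via explicit bounds on $\psi-\theta$ rather than folding in a $\sqrt{x}$ prime-power correction by hand, and bounds the zero sum only with $hx^{\beta-1}$ rather than the minimum with $2x^{\beta}/|\gamma|$ --- the analytic core is the same as in Section~\ref{powers_proof}.

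The genuine gap is in your coverage of small $n$, which you correctly identify as the crux but then mis-handle. You propose to dispose of $n<n_{0}$ using a Dusart-type interval $\left(y,\,y\left(1+\frac{1}{C\log^{2}y}\right)\right]$, equivalently $\left(y,\,y+\frac{C_{2}y}{\log^{2}y}\right]$ with $C_{2}\approx 1/111$ the best available constant. Containment in $\left(y,\,y+155y^{154/155}\right]$ forces $y^{1/155}\leq 155\,C_{2}^{-1}\log^{2}y$, which caps $y$ at roughly $e^{4100}$, i.e. $n$ at roughly $3\times 10^{11}$ (your own bound $155^{3}C(\log n)^{2}\gtrsim n$ gives the same order). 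But the main inequality (\ref{powers-condition}) at $m=155$, with the optimal $\alpha\approx 0.008$, only begins to hold around $x\approx e^{4810}$, i.e. $n\approx 3\times 10^{13}$. The two ranges miss each other by roughly two orders of magnitude, so your ``crux'' assertion that $n_{0}$ lands comfortably inside the Dusart range is false. The paper closes this gap by instead using the much sharper intervals of \cite{CH_L_corr-arX}, of the form $\left(x\left(1-\Delta^{-1}\right),\,x\right]$ with $\Delta=2.5\times 10^{11}$ for $x\geq e^{600}$ (and a smaller $\Delta$ from $4\cdot 10^{18}$): these are contained in the $155$th-powers interval for all $x\leq(155\Delta)^{155}\approx e^{4850}$, comfortably past the threshold, with \cite{O_H_P_14} handling $x\leq 4\cdot 10^{18}$. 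This is precisely the point made in Section~\ref{discussion}: the log-power intervals (\ref{log-interval}) ``did not extend past'' the constant-$\Delta$ intervals, so without the stronger small-$x$ tool your scheme as written would land on a substantially larger power $m$, not $155$.
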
 
\noindent This result can also be used in work on prime-representing functions, e.g. see \cite{Elsholtz_20}. To give more context for Theorem \ref{powers}, the following gives a short summary on the different forms of $f(x)$ which allow $(x,x+f(x)]$ to contain a prime.\footnote{Le projet TME-EMT from Olivier Ramar{\'e} is a very useful resource for these and related results.}

Interval estimates date back to Bertrand's postulate of 1845. He proposed that there should be at least one prime in $(x,2x)$ for all integers $x>1$. This was proved by Chebyshev in 1852. Intervals with $f(x)=Cx$ for $1<C<2$ are the largest in the long-run, but can be the smallest for sufficiently small $x$. These results have been refined in \cite{Schoenfeld_76}, \cite{R_S_2003}, \cite{K_L_14}, and most recently by the author and Lee in \cite{CH_L_21}, with corrections in \cite{CH_L_corr-arX}. From \cite{CH_L_corr-arX}, we know there is at least one prime in $\left( x\left( 1- \Delta^{-1} \right), x \right]$ for all $x\geq x_0$ with $(x_0, \Delta) = (4\cdot 10^{18}, 3.9\cdot 10^{7})$ or $(e^{600}, 2.5\cdot 10^{11})$, among others.

For sufficiently large $x$, the next smallest intervals have $f(x) = C_k x (\log x)^{-k},$ with some integer $k\geq 2$ and constant $C_k$. These intervals can be deduced from a certain type of error estimate for the prime number theorem (PNT), as was done by Trudgian \cite[Cor. 2]{Trudgian_16} and Dusart \cite[Prop. 5.4]{Dusart_18}, among others. For example, Corollary 2 of \cite{Trudgian_16} states that for $k=2$ we can take $C_k = 1/111$ for all $x\geq 2898242$.

The smallest intervals in the long-run have $f(x) = C x^a$, with $C>0$ and $a\in (1/2,1)$. Most of these results use estimates for the Riemann zeta-function $\zeta(s)$ and its zeros. For example, Ingham \cite{Ingham_37} found that we can take $f(x) = x^{\theta+\epsilon}$ with $\theta=(1+4c)/(2+4c)$ if we have $\left| \zeta \left(1/2 +it\right) \right| \leq A t^c$ as $t\rightarrow \infty$ with constants $c$, $A>0$ and sufficiently large $x$. Bourgain \cite{Bourgain_17} showed that we can take $c=13/84 +\epsilon$, the smallest to date, which gives $\theta = 34/55$ in Ingham's method. More recently, the best results have incorporated sieve methods. Iwaniec and Jutila \cite{I_J_79} first used a sieving argument to prove $\theta =5/9$ for sufficiently large $x$. At present, the smallest interval is from Baker, Harman, and Pintz \cite{B_H_P_2001}, of $[x,x+x^{0.525}]$.

The result of \cite{B_H_P_2001} is considered particularly strong because of how close it comes to results which assume the Riemann hypothesis (RH). Cram{\'e}r \cite{Cramer_20} showed that assuming RH gives us primes in $\left( x,x+C \sqrt{x} \log x \right]$ for some $C$ and sufficiently large $x$. Carneiro, Milinovich, and Soundararajan \cite[Thm.~5]{C_M_S_19} give the best explicit version, of $C=22/25$ for $x\geq 4$.


As mentioned, the best explicit result in the long run is Dudek's consecutive cubes \cite{Dudek_16p}. This was proved following Ingham's method in \cite{Ingham_37} with $f(x) = 3x^{\frac{2}{3}}$. It would be similarly possible to take $f(x) = m x^{1-\frac{1}{m}}$, and seek primes between consecutive $m^\text{th}$ powers. This is done in Section \ref{powers_proof}: we refine Dudek's method, and utilise the interval results in \cite{CH_L_corr-arX}, to arrive at Theorem \ref{powers}. Improvements come from using corrected and/or more recent estimates for the zeros of $\zeta(s)$, the PNT, and the Riemann--von Mangoldt explicit formula --- detailed in Section \ref{relevant-results}. We also carry out further optimisation, using additional parameters and numerical optimisation functions in Python. In Section \ref{Goldston} we make explicit an asymptotically better estimate for the error in the truncated Riemann--von Mangoldt explicit formula from Goldston \cite{Goldston_83}. Section \ref{discussion} discusses the relative impact of each of these results on Theorem \ref{powers}, and the room for improvement.

\section{Estimates to be used}\label{relevant-results}

The proof of Theorem \ref{powers} begins with Chebyshev’s functions $$\theta(x)=\sum_{p\leq x} \log p \qquad \text{and} \qquad \psi(x) = \sum_{n\leq x} \Lambda (n),$$ where $\Lambda(n)$ is the von Mangoldt function. Results on the zeros of $\zeta(s)$ are useful for estimating these functions. We will use the most recent estimates for the location and density of the non-trivial zeros (in the ``critical strip" $0<\text{Re}(s)<1$), as well as computational verification of RH. RH states that all non-trivial zeros of $\zeta(\sigma +it)$ have $\sigma = 1/2$, and has most recently been verified over $|t|\leq 3\, 000\, 175\, 332\, 800$ by Platt and Trudgian \cite{P_T-RH_21}. The previous computation from Platt \cite{Platt_17} up to $3.06\cdot 10^{10}$ was used in many of the following results, so will be marked with $H_p$. Otherwise, the largest known ``Riemann height" will be denoted $H_0$.

Above $H_0$, the non-trivial zeros are known to lie outside zero-free regions. The classical region is deduced from Hadamard and de la Vall{\'e}e Poussin's proof of the PNT, and has been made explicit and refined in a number of papers, including \cite{Ford_2002} and \cite{M_T_2015}. Depending on the range of $t$ for which the result is used, some of these estimates are better than others. We will use Ford's, in Theorem 3 of \cite{Ford_2002}. It states that for $|t|\geq 2\cdot 10^{14}$ there are no zeros with $\sigma\geq 1-\nu_1(t)$, where
\begin{align}\label{zerofree_Ford}
\nu_1(t) = \frac{1}{R(|t|) \log |t|}, \qquad R(t) = \frac{J(t)+0.685+0.155\log\log t}{\log t \left(0.04962 - \frac{0.0196}{J(t)+1.15} \right)},
\end{align}
and\footnote{This definition of $J(t)$ has been improved by Hiary \cite{Hiary_16} and corrected as per the comments in Section 2 and footnote 3 of \cite{Patel_21}.} $J(t) = \log(t)/6 + \log\log t + \log(0.77)$. For larger $t$, there is an asymptotically wider region from Korobov \cite{Korobov_1958} and Vinogradov \cite{Vinogradov_1958}. Ford \cite{Ford_2002} also made this explicit, proving that for $c=57.54$ and $|t|\geq 3$ there are no zeros with $\sigma\geq 1-\nu_2(t)$ for
\begin{align}\label{zerofree_F}
\nu_2(t) = \frac{1}{c\log^{2/3}t(\log\log t)^{1/3}}.
\end{align}

Outside the zero-free region, but within the critical strip, there are estimates on the number of zeros up to some $T>0$, denoted $N(T)$. Backlund \cite{Backlund_18} proved that $$N(T) = \frac{T}{2\pi} \log\frac{T}{2\pi e} + \frac{7}{8} + S(T) + O\left( \frac{1}{T} \right),$$ where $S(T)=O(\log T)$. This was made explicit by Rosser \cite{Rosser_1941}, and most recently by Hasanalizade, Shen, and Wong \cite[Cor.~1.2]{H_S_W_22}. It was shown that for $T\geq e$,
\begin{align}\label{R(T)}
\left| N(T) - \frac{T}{2\pi} \log\frac{T}{2\pi e} - \frac{7}{8} \right| \leq a_1\log{T} + a_2\log\log{T} + a_3
\end{align} 
with $a_1= 0.1038$, $a_2=0.2573$, and $a_3 = 9.3675$. Sharper estimates are possible for smaller areas of the critical strip: we can estimate $$N(\sigma,T) = \left| \{\rho=\beta+i\gamma: \zeta(\rho)=0,  0<\gamma<T \text{ and } \sigma < \beta < 1\} \right|.$$  
One of the best explicit estimates is from Kadiri, Lumley, and Ng \cite{K_L_N_2018}. Their result builds on Ramar{\'e}'s \cite{Ramare_2016_L} explicit version of Ingham's zero-density estimate in \cite{Ingham_37}, and is valid for any $\sigma > \frac{1}{2} + \frac{d}{\log H}$, with $d>0$ and $H\in [1002,H_0)$. For any $T\geq H_0$, Kadiri et al. give
\begin{equation} \label{zd_K}
N(\sigma, T) \leq N_1(\sigma, T) = C_1(\sigma) \left( \log(kT) \right)^{2\sigma} (\log T)^{5-4\sigma}T^{\frac{8}{3}(1-\sigma)} + C_2(\sigma) \log^2 T,
\end{equation} for any $k\in \left[ 10^9 H^{-1} ,1\right]$. Using $H_0 = H_p$, Table 1 of \cite{K_L_N_2018} lists values of $C_1$ and $C_2$ for specific $\sigma$, after optimising over several parameters.\footnote{Values for $C_1$ and $C_2$ have been re-calculated in \cite{F_K_S_22} and \cite{J_Y_arxiv}.} Another zero-density estimate was given by Simoni\v{c} \cite{Simonic_20}, of an explicit version of Selberg's zero-density estimate \cite{Selberg_46}. For $\frac{1}{2} \leq \sigma \leq \sigma_0 = \frac{1}{2} + \frac{8}{\log T_0}$, $T\geq 2T_0 \geq 2H_0$, and given constant $C(\sigma_0, T_0)$, we can take
\begin{equation} \label{zd_S}
N(\sigma,T) \leq C(\sigma_0, T_0) T^{1-\frac{1}{4}(\sigma-\frac{1}{2})} \log \frac{T}{2}.
\end{equation}
For $\sigma \in \left[1/2, 37/58 \right]$, (\ref{zd_S}) will be better than (\ref{zd_K}) for sufficiently large $T$.

Zero-density estimates are commonly used in estimates for the PNT. The PNT in terms of $\psi(x)$ can be deduced from the Riemann--von Mangoldt formula: for non-trivial zeros $\rho=\beta+i\gamma$ of $\zeta(s)$, and any $x>1$ not a prime power,
\begin{equation*}
\psi(x) = x - \sum_{\rho} \frac{x^\rho}{\rho} - \log(2\pi) - \frac{1}{2} \log \left(1-\frac{1}{x^2}\right).
\end{equation*}
The sum over $\rho$ is divergent for unordered $\rho$, but can be truncated to write 
\begin{equation}\label{trunc_vonM}
\psi(x) = x - \sum_{|\gamma| < T} \frac{x^\rho}{\rho} + E(x,T),
\end{equation}
with $E(x,T)$ decreasing in $T$. Dudek \cite{Dudek_16p} showed that for half odd integers $x>e^{60}$, we have
\begin{equation}\label{Dudek_psi}
|E(x,T)| \leq \frac{2 x\log^2 x}{T}
\end{equation}
with any $T\in (50, x)$. Goldston \cite{Goldston_83} proved an asymptotically smaller estimate of $$E(x,T) = O\left(\frac{x\log x \log\log x}{T}\right),$$ which can be made explicit. This is done in the following section --- see Theorem \ref{ThmRvM}.

The PNT estimates for $\psi(x)$ can be translated into those for $\theta(x)$ if needed. Costa Pereira \cite[Thm. 5]{Costa_85} (see also Dusart \cite{Dusart_18}) gives the best lower bound on their difference,
\begin{equation}\label{Costa_theta}
\psi(x)-\theta(x) > 0.999x^\frac{1}{2} + x^\frac{1}{3},
\end{equation}
which holds for $x\geq e^{38}$, and Broadbent et al. \cite[Cor. 5.1]{BKLNW_21} give the most recent explicit upper bounds. Of these, we will use the version which holds for all $x\geq e^{1000}$,
\begin{equation}\label{Broadbent_diff}
\psi(x) - \theta(x) < a_1 x^\frac{1}{2} + a_2 x^\frac{1}{3},
\end{equation}
with $a_1= 1+ 1.99986 \cdot 10^{-12}$ and $a_2 = 1 + 1.936 \cdot 10^{-8}$.

\section{An explicit version of Goldston's result}\label{Goldston}

Davenport's exposition in Chapter 17 of \cite{Davenport_2013} (see, in particular, equation (3)) shows that for $T>0$ and $x\geq 2$ which is not a prime power,
\begin{align*}
\psi(x) = \frac{1}{2\pi i} \int_{c-iT}^{c+iT} \left(-\frac{\zeta'(s)}{\zeta(s)} \right) \frac{x^s}{s} ds +  O^{*}\left(\frac{1}{\pi T} \sum_{n=1}^\infty \Lambda(n) \left(\frac{x}{n}\right)^c \left| \log\frac{x}{n} \right|^{-1} \right),
\end{align*}
where, here and hereafter, $O^{*}$ denotes a constant of absolute value not exceeding 1. The integral gives the exact main term and smaller-order terms in the truncated Riemann--von Mangoldt explicit formula (see (\ref{trunc_vonM})). The dominant error terms come from the sum. Goldston \cite{Goldston_83} showed that $E(x,T)$ of (\ref{trunc_vonM}) can be reduced to $$E(x,T)=O\left( \frac{x\log x \log\log x}{T} + \frac{x\log T}{T} + \log x \right),$$ for sufficiently large $x\geq 3$ and $T\geq 3$. This is made explicit in Lemma \ref{loglog}, and will be combined with Dudek's estimate \cite{Dudek_16p} for the integral to prove Theorem \ref{ThmRvM}.

\begin{Lem}\label{loglog}
For half odd integers $x\geq x_K$, and $c=1+1/\log x$, we have $$\sum_{n=1}^\infty \Lambda(n) \left(\frac{x}{n}\right)^c \left| \log\frac{x}{n} \right|^{-1} < M x\log x\log\log x,$$ where pairs of $x_K$ and $M$ are given in Table \ref{table_M}. 
\end{Lem}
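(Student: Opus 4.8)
The plan is to follow the classical treatment of this sum (Chapter~17 of \cite{Davenport_2013}), keeping every constant explicit, and then to exploit the sparsity of prime powers close to $x$ in order to recover Goldston's $\log\log x$ in place of the trivial $\log x$. Write $S(x)$ for the sum to be bounded. Since $x$ is a half odd integer, $x/n\neq 1$ for every $n\in\mathbb{N}$, so each term is finite and $|x-n|\geq\tfrac12$ throughout; and with $c=1+1/\log x$ one has $x^{c}=ex$, while partial summation from the bound $\psi(t)<1.04\,t$ gives $-\zeta'(c)/\zeta(c)=\sum_{n\geq1}\Lambda(n)n^{-c}\leq 1.06\log x$ for all $x\geq x_K$.

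First I would split $S(x)$ at two cut-offs $\alpha<1<\beta$ (to be optimised, e.g.\ $\alpha=\tfrac34$, $\beta=\tfrac54$) into a \emph{far} part, $n\leq\alpha x$ or $n\geq\beta x$, and a \emph{near} part, $\alpha x<n<\beta x$. In the far part $|\log(x/n)|^{-1}\leq 1/\min\{\log(1/\alpha),\log\beta\}$, so this part is at most a constant times $x^{c}\sum_{n\geq1}\Lambda(n)n^{-c}\ll x\log x$; the identity $x^{c}=ex$ and the bound on $-\zeta'(c)/\zeta(c)$ do the work, and the decay of $(x/n)^{c}$ for $n\geq\beta x$ is a bonus. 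This is comfortably inside the target, so the far part is routine.

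The substance is the near part, where $(x/n)^{c}\leq(1/\alpha)^{c}$ is bounded and the elementary inequality $|\log(1+u)|\gg|u|$ (valid since $|x-n|/n$ is bounded here) gives $|\log(x/n)|^{-1}\ll x/|x-n|$ with an explicit constant. I would peel off the proper prime powers $n=p^{k}$, $k\geq2$, a dyadic split by $|x-n|$ showing that these contribute only $O(x\log x)$ in all (dominated by the $O(1)$ of them closest to $x$, which are at distance $\geq\tfrac12$). For the primes $p\in(\alpha x,\beta x)$ I would decompose dyadically: for $j\geq0$ set $B_{j}=\{p:2^{j-1}\leq|x-p|<2^{j}\}$ (reading $2^{-1}$ as $\tfrac12$), running up to $J\asymp\log_{2}x$. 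On $B_{j}$ one has $\Lambda(p)=\log p\leq\log x+O(1)$ and $|\log(x/p)|^{-1}\ll x\,2^{-j}$, while an explicit Brun--Titchmarsh-type short-interval bound gives $|B_{j}|\ll 2^{j}/\log(2^{j})=2^{j}/(j\log2)$ once $j$ exceeds a fixed $j_{0}$; for $j<j_{0}$ the trivial count $|B_{j}|\leq 2^{j}+1$ is used and contributes $O(x\log x)$ in all, and for the longest blocks one may instead insert explicit PNT for $\psi$ in short intervals to improve the constant. Hence $B_{j}$ contributes $\ll(x\log x)/j$, and $\sum_{j_{0}\leq j\leq J}1/j\asymp\log J\asymp\log\log x$, giving a near-part total of $\ll x\log x\log\log x$. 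Adding the far part, the proper prime powers and this dyadic sum, and then optimising $\alpha$, $\beta$, $j_{0}$ (and the short-interval bounds used in each block) together with the lower endpoint of the $x$-range, produces the pairs $(x_K,M)$ of Table~\ref{table_M}.

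I expect the principal difficulty to be the explicit bookkeeping rather than any new idea: one needs a short-interval bound uniform enough to cover every block $B_{j}$ with $j\geq j_{0}$; one must verify that the crude treatment of the finitely many shortest blocks and of the proper prime powers really is absorbed into $O(x\log x)$; and one must pin down the $O(1)$ terms in $\sum_{j\leq J}1/j$ and in $-\zeta'(c)/\zeta(c)$ precisely enough that, for $x\geq x_K$, the $x\log x$ contributions are dominated by $M\,x\log x\log\log x$ with the tabulated $M$. Carrying out this optimisation for several values of $x_K$ is exactly what yields the multiple rows of Table~\ref{table_M}.
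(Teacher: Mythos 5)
Your high-level plan is Goldston's, and it matches the paper's: separate the $n$ far from $x$ (handled trivially via $-\zeta'(c)/\zeta(c)$, exactly as Dudek does, and exactly as the paper does for its $S_1,S_5$), peel off the proper prime powers $p^k$, $k\geq 2$, as negligible, and attack the primes near $x$ with Brun--Titchmarsh to convert the naive $\log x$ into $\log\log x$. That much is right, and the claim $S(x)\ll x\log x\log\log x$ would go through.

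The genuine gap is in the step you describe most casually. You bound $\sum_{p}1/|x-p|$ by dyadic blocks $B_j=\{p:2^{j-1}\leq|x-p|<2^j\}$, so Brun--Titchmarsh gives $|B_j|\leq 2\cdot 2^{j-1}/\log(2^{j-1})$ and each block contributes about $2/((j-1)\log 2)$ to the sum; summing over $j\leq J\approx\log x/\log 2$ yields $\tfrac{2}{\log 2}\log\log x+O(1)$. The paper instead writes $\sum_p\frac{1}{x-p}=\sum_n\frac{1}{n}\bigl[P(x,n+1)-P(x,n)\bigr]$ and Abel-sums to $\sum_n\frac{P(x,n)}{n(n-1)}$, which with Brun--Titchmarsh becomes $\sum_n\frac{2}{(n-1)\log n}\approx 2\log\log x+O(1)$. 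The dyadic device therefore loses a factor of $1/\log 2\approx 1.44$ in the leading coefficient on both the $p<x$ and $p>x$ sides. Asymptotically the paper's constant tends to $M\to 2(1+\alpha)$; yours would tend to roughly $\tfrac{2}{\log 2}(1+\alpha)$. No choice of $\alpha,\beta,j_0$ recovers that factor, so your construction cannot produce the tabulated values of $M$ in Table~\ref{table_M}: for $\log x_K=10^3$ you would land well above $M=7.9074$, and since $K$ is built directly from $M$ this would propagate into a weaker Theorem~\ref{ThmRvM} and Theorem~\ref{powers}. To close the gap, replace the dyadic partition by the exact-distance partial summation: sum $1/(x-p)$ directly against the increments of the short-interval prime counting function $P(x,\cdot)$ and then apply Brun--Titchmarsh, which is what makes the explicit numbers come out.
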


\begin{thm}\label{ThmRvM}
For $50<T < x$ and half odd integers $x\geq x_K$ we have $$\psi(x) = x - \sum_{|\gamma| < T} \frac{x^\rho}{\rho} + O^{*}\left( \frac{Kx \log x\log\log x}{T} \right),$$ where pairs of $x_K$ and $K$ are given in Table \ref{table_K}.
\end{thm}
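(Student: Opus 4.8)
The plan is to substitute the bound of Lemma \ref{loglog} into Davenport's identity stated above and then dispose of the remaining integral by the contour shift of Dudek \cite{Dudek_16p}. Since a half odd integer is never a prime power, Davenport's identity applies to our $x$ with $c = 1 + 1/\log x$, and Lemma \ref{loglog} bounds its explicit $O^{*}$ term: for $x \geq x_{K}$ (taking $x_{K}$ at least as large as the threshold in Lemma \ref{loglog}),
$$\psi(x) = \frac{1}{2\pi i}\int_{c-iT}^{c+iT}\left(-\frac{\zeta'(s)}{\zeta(s)}\right)\frac{x^{s}}{s}\,ds + O^{*}\!\left(\frac{M x\log x\log\log x}{\pi T}\right).$$

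Next I would evaluate the integral exactly as in \cite{Dudek_16p}: shift the contour to the line $\Re s = -U$, let $U \to \infty$, and collect residues. This contributes the main term $x$ from the simple pole of $-\zeta'/\zeta$ at $s=1$, the terms $-x^{\rho}/\rho$ from the non-trivial zeros $\rho$ with $|\gamma|<T$, and the lower-order pieces $-\log(2\pi)$ and $-\tfrac12\log(1-x^{-2})$ from the pole at $s=0$ and the trivial zeros; the far vertical segment vanishes in the limit because $x$ is not a prime power. What is left is the contribution of the two horizontal segments $\Im s = \pm T$, which Dudek bounds explicitly, and which for $50 < T < x$ is of order $x\log T / T$. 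Writing $E(x,T) = \psi(x) - x + \sum_{|\gamma|<T} x^{\rho}/\rho$ as in (\ref{trunc_vonM}), we therefore obtain
$$E(x,T) = O^{*}\!\left(\frac{M x\log x\log\log x}{\pi T} + \frac{c_{1} x\log T}{T} + \log(2\pi) + \tfrac12\log\tfrac{1}{1-x^{-2}}\right),$$
where $c_{1}$ is the explicit constant from Dudek's horizontal-segment bound. Since $50 < T < x$ forces $\frac{x\log x\log\log x}{T} > \log x\log\log x$, which for $x \geq x_{K}$ dominates both the middle term (as $\log T < \log x$) and the two $O(1)$ terms, every summand is a bounded multiple of $\frac{x\log x\log\log x}{T}$. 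Collecting the constants gives $\psi(x) = x - \sum_{|\gamma|<T} x^{\rho}/\rho + O^{*}(Kx\log x\log\log x/T)$ with $K$ depending only on $x_{K}$ --- through $M$, and through how tightly one absorbs the lower-order terms --- which, optimised numerically, yields the pairs in Table \ref{table_K}.

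The main obstacle is that Dudek's estimate (\ref{Dudek_psi}) bundles the error from the sum and the error from the contour shift into a single $2x\log^{2}x/T$; to profit from the sharper Lemma \ref{loglog} one must split these and confirm that the horizontal-segment part really is $O(x\log T/T)$ rather than $O(x\log^{2}x/T)$. Keeping that estimate explicit is the delicate point: it requires choosing $T$ at distance $\gg 1/\log T$ from every zero ordinate (extending to all $T$ afterwards as in \cite{Dudek_16p}), and bounding $|\zeta'/\zeta(\sigma \pm iT)|$ on the relevant lines via its partial-fraction expansion together with the zero-counting estimate (\ref{R(T)}). Once this is secured, what remains is bookkeeping of explicit constants and the numerical optimisation that populates Table \ref{table_K}; the genuinely analytic input, Lemma \ref{loglog}, is here taken as given.
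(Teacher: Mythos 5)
Your proposal follows the paper's proof exactly: substitute Lemma \ref{loglog} into Davenport's identity, shift the contour as Dudek does (picking up $x$, the zeros, and the constants $\zeta'(0)/\zeta(0)=\log 2\pi$ and $\tfrac12\log(1-x^{-2})$), and use Dudek's explicit bounds for the horizontal segments $I_2$ and the far vertical segment $I_3$, then absorb everything into the $Kx\log x\log\log x/T$ term numerically. One small inaccuracy worth flagging: you describe the horizontal-segment contribution as $O(x\log T/T)$, but Dudek's $J_2$ contains the terms $\frac{ex\log x}{\pi(T-1)}$ and $\frac{ex\log^2 T}{2\pi(T-1)\log x}$, so it is actually of size $O(x\log x/T)$ when $T$ is close to $x$; this does not hurt the argument, since $\log x$ is still dominated by $\log x\log\log x$, but the obstacle you highlight in your final paragraph --- needing to show the horizontal pieces are $O(x\log T/T)$ --- is not the condition actually required. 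Likewise, you needn't re-derive the split yourself: Dudek already states $|I_2|$ and $|I_3|$ separately from the sum term, which is exactly what the paper reuses.
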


\begin{proof}[Proof of Lemma \ref{loglog}]
Let $x$ be half an odd integer, and $$S(x) = \sum_{n=1}^\infty \Lambda(n) \left(\frac{x}{n}\right)^c \left| \log\frac{x}{n} \right|^{-1},$$ with $\alpha>1$ a parameter. The sum can be split into five parts,
\begin{align*}
\sum_{n=1}^\infty = \sum_{n=1}^{[x/\alpha]} + \sum_{n=[x/\alpha]+1}^{[x]-1} + \sum_{n=[x]}^{[x]+1} + \sum_{n=[x]+2}^{[\alpha x]} + \sum_{n=[\alpha x]+1}^\infty,
\end{align*} denoting each partial sum with $S_i$, consecutively from $i=1$ to 5. The resulting bound on $S(x)$ will be optimised over $\alpha$. Estimates for the first and last sums can be taken straight from \cite{Dudek_16p}, as they are both relatively small compared to the overall bound, with
\begin{align*}
S_1+S_5 &= \frac{e}{\log \alpha} x\log x.
\end{align*}
The estimate for $S_3$ can be just as small: using $\Lambda(n) \leq \log n$ and $[x]=x-\frac{1}{2}$ we have
\begin{align*}
S_3 &\leq \left( \frac{x}{x-\frac{1}{2}} \right)^c \log\left(x-\frac{1}{2}\right) \left| \log \left(\frac{x}{x-\frac{1}{2}}\right) \right|^{-1} + \left( \frac{x}{x+\frac{1}{2}} \right)^c \log \left(x+\frac{1}{2}\right) \left| \log\left(\frac{x}{x+\frac{1}{2}}\right) \right|^{-1}.
\end{align*}
To bound the first term we can use $$\left| \log \left(\frac{x}{x-\frac{1}{2}}\right) \right|^{-1} < \left| \log\left(\frac{x}{x+\frac{1}{2}}\right) \right|^{-1} = \left( \log\left(\frac{x+\frac{1}{2}}{x}\right) \right)^{-1},$$ where $$\log\left(\frac{x+\frac{1}{2}}{x}\right) = \int_{x}^{x+1/2} \frac{1}{t} dt > \frac{1}{2x+1},$$
and for $x\geq e^{100}$ we have $$\left( \frac{x}{x-\frac{1}{2}} \right)^c< 1+10^{-43}.$$ Combining these gives
\begin{align*}
S_3 &< 2(1+10^{-43})(2x+1) \log\left(x+1/2\right) < (4+10^{-20})x \log x.
\end{align*} 

The estimates for $S_2$ and $S_4$ utilise Goldston's method. The improvement largely comes from incorporating the Brun--Titchmarsh theorem for primes in intervals. For $S_2$, the inner sum is over $p^k>x/\alpha$, and as we later need $\alpha<2$ we can write
\begin{align*}
S_2 &= x^c \sum_{1\leq k\leq \frac{\log x}{\log 2}} \sum_{p^k =[x/\alpha]+1}^{[x]-1} \frac{\log p}{p^{kc}} \left| \log\frac{x}{p^k} \right|^{-1} \\
&< \alpha^{c} \log \frac{x}{\alpha} \sum_{1\leq k\leq \frac{\log x}{\log 2}} \sum_{p^k =[x/\alpha]+1}^{[x]-1} \left| \log\frac{x}{p^k} \right|^{-1}.
\end{align*}
Using the Taylor series for $\log(1-u)$ with $|u|<1$,
\begin{align}\label{log-Taylor}
\left| \log \frac{x}{p^k} \right| = - \log\left( 1 - \frac{x-p^k}{x} \right) > \frac{x-p^k}{x}\left(1+\frac{x-p^k}{2x} \right),
\end{align}
which results in
\begin{align*}
S_2 &< \alpha^{c}x \log \frac{x}{\alpha} \sum_{1\leq k\leq \frac{\log x}{\log 2}} \sum_{p^k =[x/\alpha]+1}^{[x]-1} \frac{1}{x-p^k} \left(1+\frac{x-p^k}{2x} \right)^{-1}.
\end{align*}


To estimate the double sum, we will consider the cases $k=1$ and $k\geq 2$ separately. As $x$ is half an odd integer, it can always be placed in $(2m,2m+2)$ for some $m\in \mathbb{Z}^{+}$. When $k=1$, we are summing over primes in $\left[[x/\alpha]+1,[x]-1 \right]$. Since we have $[x]\leq 2m+1$ and $[x/\alpha]+1\geq m+1$ for $1<\alpha<2$, this interval is contained within $[m+1, 2m]$ if we choose $\alpha<2$. In this case,
\begin{align*}
\sum_{p =[x/\alpha]+1}^{[x]-1} \frac{1}{x-p} &\leq \sum_{m < p\leq 2m-1} \frac{1}{2m-p}.
\end{align*}

Let $P(x,y)$ denote the number of primes in $(x-y,x]$, so that 
\begin{align*}
\sum_{m\leq p\leq 2m-1} \frac{1}{2m-p} &= \sum_{n=1}^m \frac{1}{n} \left[ P(2m, n+1) - P(2m, n) \right] \\
&\leq \sum_{2\leq n\leq x/\alpha} \frac{1}{n(n-1)} P(2m, n) + \frac{\alpha}{x} P\left(\frac{2x}{\alpha}, \frac{x}{\alpha}+1\right).
\end{align*}
Montgomery and Vaughan's \cite{M_V_73} version of the Brun--Titchmarsh theorem implies
\begin{equation}\label{BT-MV}
P(x,y)\leq \frac{2y}{\log y}
\end{equation}
for $1<y< x$. With this, and the Euler--Maclaurin formula for the resulting sum, we have
\begin{align*}
\sum_{p =[x/\alpha]+1}^{[x]-1} \frac{1}{x-p} &\leq \sum_{2\leq n\leq \frac{x}{\alpha}} \frac{2}{(n-1)\log n} + 2\left(1+\frac{\alpha}{x}\right) \frac{1}{\log\left(\frac{x}{\alpha}+1\right)}  \\
&\leq \sum_{3\leq n\leq \frac{x}{\alpha}} \frac{2}{n\log n} + \frac{2}{\log 2} + \frac{1}{\log 3} + \frac{2}{\log\left(\frac{x}{\alpha}+1\right) } \\
&\leq 2 \log\log \frac{x}{\alpha} + 3.92 + \frac{1}{\log \frac{x_K}{\alpha}} \left( \frac{\alpha}{x_K} + 2 \right),
\end{align*}
with the last line valid for $x\geq x_K$. Note that it would be possible to reduce the constant term by evaluating more terms in the sum over $n$. For example, if the sum were directly evaluated for $2\leq n\leq 30$, the constant term would drop to 3.69. The constant and lower-order term could also be reduced by using more terms in the Euler--Maclaurin expansion.

For $k\geq 2$, the inner sum can be estimated using an integral, with
\begin{align*}
\sum_{\frac{x}{2} < p^k \leq x-1} \frac{1}{x-p^k} &\leq \int_{\frac{x}{2}}^{x-1} \frac{1}{x-y} d[y^{1/k}] \\
&\leq \frac{1}{k} \int_{\frac{x}{2}}^{x-1} \frac{y^{-\frac{1}{2}}}{x-y} dy = \frac{1}{k\sqrt{x}} \left( \log\left( \frac{1+\sqrt{1-1/x}}{1-\sqrt{1-1/x}}\right) -2\log(1+\sqrt{2}) \right).
\end{align*}
Summing over $k$,
\begin{align*}
\log \frac{x}{\alpha}\sum_{2\leq k\leq \frac{\log x}{\log 2}} \sum_{p^k =[x/\alpha]+1}^{[x]-1} \frac{1}{x-p^k} &< \frac{\log \frac{x}{\alpha}}{\sqrt{x}} \left( \log\left( \frac{1+\sqrt{1-1/x}}{1-\sqrt{1-1/x}}\right) -2\log(1+\sqrt{2}) \right) \sum_{2\leq k\leq \frac{\log x}{\log 2}} \frac{1}{k}  \\
&< \frac{\log^2 x}{\sqrt{x}}\left( \log\left(\frac{\log x}{\log 2}\right) + \gamma + \frac{\log 2}{2\log x} - 1 \right).
\end{align*}
Not only is this bound decreasing with $x$, it is practically negligible for sufficiently large $x$: less than $10^{-200}$ for $x\geq e^{1000}$. Combining the estimates for $k=1$ and $k\geq 2$ gives
\begin{align*}
S_2 &< 2\alpha^c x \log \frac{x}{\alpha} \left( \log\log \frac{x}{\alpha} + 1.96 +  \frac{1}{2\log \frac{x_K}{\alpha}} \left( \frac{\alpha}{x_K} + 2 \right) \right) \\
&< 2M_1 \alpha^c x \log \frac{x}{\alpha} \log\log \frac{x}{\alpha},
\end{align*}
with $$M_1= 1+ \frac{1.96}{\log\log \frac{x_K}{\alpha}}  +  \frac{1}{2\log \frac{x_K}{\alpha}\log\log \frac{x_K}{\alpha}} \left( \frac{\alpha}{x_K} + 2 \right).$$

A similar method can be used for $S_4$, but with $x/p^k <1$ and 
\begin{align*}
\left| \log \frac{x}{p^k} \right| &= \sum_{n=1}^\infty \frac{1}{n} \left( \frac{p^k-x}{p^k} \right)^n > \frac{p^k-x}{p^k}.
\end{align*}
We thus have
\begin{align*}
S_4 &< x^c \sum_{1\leq k\leq \frac{\log \alpha x}{\log 2}} \sum_{p^k =[x]+2}^{[\alpha x]} \frac{\log p}{p^{k(c-1)}(p^k-x)} \\
&< x\log x \sum_{1\leq k\leq \frac{\log \alpha x}{\log 2}} \frac{1}{k} \sum_{p^k =[x]+2}^{[\alpha x]} \frac{1}{p^k-x},
\end{align*}
as the factor $p^{-k(c-1)}\log p$ is decreasing for $p\geq e^{\frac{\log x}{k}}$. For $k=1$, the second sum at most covers primes in $(x+1,\alpha x]$. It can be bounded by a sum over integers $n$, where the $n$ term is included if there is a prime in $(x+n,x+n+1]$. Using (\ref{BT-MV}), we find
\begin{align*}
\sum_{x+1<p \leq \alpha x} \frac{1}{p-x} &\leq \sum_{1\leq n \leq(\alpha-1) x-1} \frac{1}{n} \left[ P(x+n+1,n+1)-P(x+n,n) \right] \\
&\leq \sum_{2\leq n \leq (\alpha-1)x-1} \left(\frac{1}{n-1} - \frac{1}{n}\right) P(x+n,n) + \frac{2(\alpha-1)x}{((\alpha-1)x-1)\log((\alpha-1)x)},
\end{align*}
so that for $x\geq x_K$ we have
\begin{align*}
\sum_{x+1<p \leq \alpha x} \frac{1}{p-x} &\leq \sum_{1\leq n \leq (\alpha-1)x-2} \frac{2}{n\log(n+1)} + \frac{2(\alpha-1)x}{((\alpha-1)x-1)\log((\alpha-1)x)} \\
& \leq 2\log\log((\alpha-1)x) +3.92 + \frac{2(\alpha-1)x_K+3}{(\alpha-1)x_K\log((\alpha-1)x_K)},
\end{align*}
For $k\geq 2$, the inner sum can be estimated with
\begin{align*}
\sum_{x+1 < p^k \leq \alpha x} \frac{1}{p^k-x} &\leq \int_{x+1}^{\alpha x} \frac{1}{y-x} d[y^{1/k}] \\
&\leq \frac{1}{k} \int_{x+1}^{\alpha x} \frac{y^{-\frac{1}{2}}}{y-x} dy = \frac{1}{k\sqrt{x}} \left( 2\log\left( \sqrt{x+1}+\sqrt{x} \right) + \log\left(\frac{\sqrt{\alpha}-1}{\sqrt{\alpha}+1}\right) \right) \\
&\leq \frac{2\log\left( \sqrt{x+1}+\sqrt{x} \right)}{k\sqrt{x}} .
\end{align*}
Thus giving
\begin{align*}
\sum_{2\leq k\leq \frac{\log \alpha x}{\log 2}} \frac{1}{k} \sum_{x+1 < p^k \leq \alpha x} \frac{1}{p^k-x} &\leq \frac{2\log\left( \sqrt{x+1}+\sqrt{x} \right)}{\sqrt{x}} \sum_{2\leq k\leq \frac{\log \alpha x}{\log 2}} \frac{1}{k^2},
\end{align*}
where we can use
\begin{align*}
\sum_{2\leq k\leq \frac{\log \alpha x}{\log 2}} \frac{1}{k^2} \leq \frac{\pi^2}{6} - 1 - \frac{\log 2}{\log 2\alpha x}.
\end{align*}

The overall bound on $S_4$, for $x\geq x_K$, is
\begin{align*}
S_4 &< 2x\log x \left( \log\log((\alpha-1)x) + 1.96 + \frac{(\alpha-1)x_K+3/2}{(\alpha-1)x_K\log((\alpha-1)x_K)} \right) \\
&+ 2\left( \frac{\pi^2}{6} - 1 \right)\sqrt{x}\log x \log(2\sqrt{x+1})  \\
&< 2M_2 x\log x \log\log((\alpha-1)x)
\end{align*}
where $$M_2 = 1 + \frac{1.96\sqrt{x_K} + \left( \frac{\pi^2}{6} - 1 \right)\log(2\sqrt{x_K+1})}{\sqrt{x_K}\log\log((\alpha-1)x_K)} + \frac{2x_K+3(\alpha-1)^{-1}}{2x_K\log((\alpha-1)x_K)\log\log((\alpha-1)x_K)}.$$
The lower bound on $\alpha$ is also needed here, to ensure the last line is true for sufficiently large $x$. It is worth noting that the constant 2 cannot be reduced by increasing the smallest $x$ for which the result holds, as it comes directly from the constant in (\ref{BT-MV}).

Combining the estimates for each $S_i$ gives
\begin{align*}
S(x) &< 2M_1 \alpha^c x \log \frac{x}{\alpha} \log\log \frac{x}{\alpha} +  2M_2 x\log x \log\log((\alpha-1)x) + \left( \frac{e}{\log \alpha} + 4+10^{-20}\right) x \log x  \\
&< M x\log x\log\log x,
\end{align*}
with
\begin{align*}
M= \frac{2M_1 \alpha^c \log \frac{x_K}{\alpha} \log\log \frac{x_K}{\alpha}}{\log x_K\log\log x_K} +  \frac{2M_2\log\log((\alpha-1)x_K)}{\log\log x_K} + \left( \frac{e}{\log \alpha} + 4+10^{-20}\right) \frac{1}{\log\log x_K}
\end{align*}
for all $x\geq x_K$. As $x_K \rightarrow \infty$, $M\rightarrow 2(1+\alpha)$, implying that smaller $\alpha$ is preferable for larger $x$. However, for intermediate values of $x_K$, there will be an optimal value of $\alpha\in (1,2)$. Table \ref{table_M} lists the smallest possible values of $M$ for each $x_K$ after optimising over $\alpha$.
\end{proof}

\begin{table}[H]
\centering
\begin{tabular}{ccccc}
    & \multicolumn{4}{c}{$\log x_K$}  \\ \cline{2-5}
    & $10^3$ & $10^4$ & $10^5$ & $10^6$ \\ \hline 
    $\alpha$   &1.3933 & 1.3501 & 1.3186 & 1.2943 \\ \hline
    $M$    & 7.9074 & 7.1157 & 6.6260 & 6.2904
\end{tabular}
\caption{Admissible values of $M$, with optimised $\alpha$, in Lemma \ref{loglog}.}\label{table_M}
\end{table} 
\nopagebreak[4]

Lemma \ref{loglog} implies that for $x\geq x_K$ we have
\begin{align*}
\psi(x) = \frac{1}{2\pi i} \int_{c-iT}^{c+iT} \left(-\frac{\zeta'(s)}{\zeta(s)} \right) \frac{x^s}{s} ds + O^{*}\left( \frac{M x\log x\log\log x}{\pi T}  \right).
\end{align*}
Cauchy's theorem can be used to evaluate the integral. From \cite{Dudek_16p} we have\footnote{There is a small typo in the bound for $|I_7|$ in \cite{Dudek_16p}, which has been corrected here.} $$|E(x,T)| < \frac{\zeta'(0)}{\zeta(0)} + \frac{1}{2} \log(1-x^{-2}) + 2|I_2| + |I_3| + \frac{M x \log x\log\log x}{\pi T},$$
with, for some positive odd integer $U$, 
\begin{align*}
|I_2| < \ &  \frac{2x\log T}{T-1} + \frac{9+\log \sqrt{U^2+(T+1)^2}}{\pi T x^U} + \frac{9+\log \sqrt{U^2+(T+1)^2}}{2\pi x(T-1)} \\
 &+ \frac{ex\left( \log^2 T + 20\log T \right)}{2\pi (T-1)\log x}  + \frac{e x\log x}{\pi(T-1)} = J_2(x,T,U)
\end{align*}
and $$|I_3| < \frac{9+\log \sqrt{U^2+T^2}}{\pi x^U} = J_3(x,T,U).$$
With these estimates, the error term can be simplified to 
\begin{equation}\label{explicit_Goldston}
|E(x,T)| \leq \frac{K x \log x\log\log x}{T},
\end{equation}
where
\begin{align*}
K= \frac{T}{x_K \log x_K\log\log x_K} \left( \frac{\zeta'(0)}{\zeta(0)} + \frac{\log(1-x_K^{-2})}{2} + 2 J_2(x_K,x_K,U) + J_3(x_K,x_K,U) \right) + \frac{M}{\pi}.
\end{align*}

Although we are free to choose $U$, the size of $x_K$ is such that there is no apparent difference in $K$ as $U$ varies. We expect $J_2$ and $J_3$ to be minimised for small $U$ however, so we chose $U=1$. The following table gives values of $K$ for specific $x_K$.
\begin{table}[H]
\centering
\begin{tabular}{lllll}
    & \multicolumn{4}{c}{$\log x_K$}  \\ \cline{2-5}
    & $10^3$ & $10^4$ & $10^5$ & $10^6$ \\ \hline 
$K$ & 3.4747 & 2.9814 & 2.6821 & 2.4798
\end{tabular}
\caption{Admissible values of $K$ in (\ref{explicit_Goldston}) and Theorem \ref{ThmRvM}.}\label{table_K}
\end{table}

The limiting value of $K$ is $M/\pi$. So, as $x\rightarrow \infty$, $K$ approaches $2(1+\alpha)/\pi$. This could be reduced by refining the estimates for $I_2$ and $I_3$. However, even if $I_2$ and $I_3$ were zero, we find that $K$ only drops to around 2.5 for the estimate over $\log x\geq 1000$.

\section{Primes between consecutive powers}\label{powers_proof}

There will be at least one prime in $(n^m,(n+1)^m)$ for all $n\geq n_0$ if there is a prime in $(x,x+mx^{1-\frac{1}{m}} + \ldots + mx^{\frac{1}{m}} + 1)$ for all $x\geq n_0^m$. Therefore, it will be sufficient to show there is at least one prime in $(x,x+mx^{1-\frac{1}{m}}]$ for sufficiently large $x$. Discarding the smaller-order terms in the upper endpoint does not actually affect the final result, as they become relatively negligible at the values of $x$ we are interested in.

There will be a prime in $(x,x+h]$ if \begin{align} \label{goal}
\theta(x+h)-\theta(x) = \sum_{x< p \leq x+h} \log p
\end{align} 
is positive. This can be translated to $\psi(x)$ with (\ref{Costa_theta}) and (\ref{Broadbent_diff}), in that we have
\begin{align} \label{theta}
\theta(x+h)-\theta(x) > \psi(x+h)-\psi(x) + 0.999\sqrt{x} + x^\frac{1}{3} - a_1\sqrt{x+h} - a_2(x+h)^{1/3}
\end{align}
for $x\geq e^{1000}$, where $a_1$ and $a_2$ are given after (\ref{Broadbent_diff}). Theorem \ref{ThmRvM} gives us the estimate for $\psi(x)$, so for half odd integers $x\geq x_K$ and $50<T<x$ we have
\begin{align} \label{d_psi} 
\psi(x+h) - \psi(x) \geq &\ h - \left| \sum_{|\gamma|<T}  \frac{(x+h)^\rho - x^\rho}{\rho} \right| - K\frac{ G(x,h)}{T}
\end{align}
where $G(x,h) = (x+h)\log(x+h)\log \log(x+h) + x\log x\log\log x$, and values for $K$ and $x_K$ are in Table \ref{table_K}. Between the sum and last term, there should be some optimal value of $T$ which maximises the overall bound. To bound the sum in (\ref{d_psi}), $$\left| \frac{(x+h)^\rho - x^\rho}{\rho} \right| = \left| \int_x^{x+h} u^{\rho -1} du \right| \leq \int_x^{x+h} u^{\beta -1} du\leq h x^{\beta-1}.$$ More terms are possible in this bound, but it would not affect the final result. Hence we have
\begin{equation}\label{D_psi}
\left| \sum_{|\gamma|<T} \frac{(x+h)^\rho - x^\rho}{\rho} \right| \leq h\sum_{|\gamma|< T} x^{\beta-1} .
\end{equation}
The sum can be estimated by writing
\begin{align*}
\sum_{|\gamma|< T} (x^{\beta-1}-x^{-1}) = \sum_{|\gamma|< T} \int_0^\beta x^{\sigma-1} \log x d\sigma = \int_0^1 \sum_{\substack{|\gamma|< T \\ \beta \geq \sigma}} x^{\sigma-1}\log x d\sigma,
\end{align*}
which re-arranges to $$\sum_{|\gamma|< T} x^{\beta-1} = 2\int_0^1 x^{\sigma-1}\log x  \sum_{\substack{0<\gamma< T \\ \beta > \sigma}} 1 \ d\sigma + 2x^{-1} \sum_{0<\gamma < T} 1.$$
Estimates for $N(T)$ and $N(\sigma, T)$ can be used for the two sums. We can also incorporate zero-free regions. For $\nu(T)=\max\{\nu_1(T), \nu_2(T) \}$, defined in (\ref{zerofree_Ford}) and (\ref{zerofree_F}), we have
\begin{align} \label{sum_zeros}
\sum_{|\gamma|< T} x^{\beta-1} = 2x^{-1} N(T) +  \frac{2\log x}{x}\left( \int_0^{1/2} N(T) x^{\sigma} d\sigma  +  \int_{1/2}^{1-\nu(T)} N(\sigma,T) x^{\sigma} d\sigma \right).
\end{align}

The estimate in (\ref{zd_K}) for $N(\sigma, T)$ is asymptotically smaller than (\ref{R(T)}) for $\sigma > 5/8$ and sufficiently large $T$, so it may be useful to use $N(T)$ for some range of $\sigma\in [1/2,1)$. As such, we will re-write the two integrals in (\ref{sum_zeros}) to be split at some $\sigma_1\in [1/2, 1-\nu(T))$, i.e. 
\begin{align} \label{N_int}
&\int_0^{\sigma_1} N(T) x^{\sigma} d\sigma  +  \int_{\sigma_1}^{1-\nu(T)} N_1(\sigma,T) x^{\sigma} d\sigma \nonumber \\
& \quad \leq \frac{T\log T}{2\pi}  \left( \frac{x^{\sigma_1}-1}{\log x} \right) + C_1 T^{8/3}\log^5T  \left( \frac{ W^{1-\nu(T)}-W^{\sigma_1}}{\log W}\right)  + C_2 \log^2T \left( \frac{x^{1-\nu(T)}-x^{\sigma_1}}{\log x} \right),  
\end{align}
where $C_1=C_1(1)$, $C_2=C_2(\sigma_1)$, and $W=x (T^{\frac{4}{3}} \log T)^{-2}.$ Note that the choice of $C_1$ and $C_2$ is because $C_1(\sigma)$ is increasing and $C_2(\sigma)$ is decreasing as per (4.72) and (4.73) in \cite{K_L_N_2018}.

To simplify the bound on (\ref{sum_zeros}), let $T = x^\alpha$ for some $\alpha \in (0,1)$. With (\ref{N_int}) we have
\begin{align*}
\sum_{|\gamma|< T} x^{\beta-1} &< \frac{\alpha\log x}{\pi x^{1-\alpha-\sigma_1}} + 2C_1\alpha^3  \frac{\left(W^{-\nu(x^\alpha)} - W^{\sigma_1-1} \right)\log^4 x }{\log W}  + 2C_2\alpha^2 \log^2 x \left( x^{-\nu(x^\alpha)} - x^{\sigma_1-1} \right).
\end{align*}
The negative term tending to zero as $x\rightarrow \infty$ can be discarded, so that
\begin{align} \label{sum_fns}
\sum_{|\gamma|< T} x^{\beta-1} &< F(x) = \frac{\alpha\log x}{\pi x^{1-\alpha-\sigma_1}} +  2C_1\alpha^3 \frac{\left(W^{-\nu(x^\alpha)} - W^{\sigma_1-1} \right)\log^4 x }{\log W}  +  2C_2\alpha^2 \frac{\log^2 x}{x^{\nu(x^\alpha)} }.
\end{align}
Returning to (\ref{theta}) with the bounds in (\ref{d_psi}) and (\ref{sum_fns}),
\begin{align}\label{theta_2}
\theta(x+h)-\theta(x) > &\ h\left[ 1 - F(x) - K\frac{G(x,h)}{x^\alpha h} + \frac{E(x)}{h}\right],
\end{align}
where $E(x)=0.999x^{1/2} + x^\frac{1}{3} - a_1(x+h)^{1/2} - a_2(x+h)^{1/3}$.

It remains to optimise over $\alpha$ and $\sigma_1$ to find the smallest $m$ satisfying
\begin{align}\label{powers-condition}
1 - F(x) - K \frac{G(x,h)}{x^\alpha h} + \frac{E(x)}{h} >0
\end{align}
for $x\geq x_0$. The LHS of (\ref{powers-condition}) will only be positive and increasing if we take $\alpha> 1/m$ and $\sigma_1 < 1-\alpha$. Of these two parameters, $\alpha$ is far more influential. In fact, the size of $\sigma_1$ has a negligible affect on $F(x)$ for large $x$, as long as the second condition is true. We will use the computation in \cite{O_H_P_14} and intervals in \cite{CH_L_corr-arX} to verify the $m^{\text{th}}$ powers interval for small $x<x_0$, so there is little need to optimise over $\sigma_1$. Hence, we will set $\sigma_1 = 0.6$, to consider any $m\geq 3$.

Values for $C_1$ and $C_2$ can be taken directly from Table 1 of \cite{K_L_N_2018}. However, this table can be updated with the latest Riemann height and a new explicit estimate for the squared divisor function: replacing (3.13) of \cite{K_L_N_2018} with Theorem 2 of \cite{C_T_19}. A correction also needs to be made to Lemma 3.2 of \cite{K_L_N_2018} --- see Remark 1.4 in \cite{F_K_S_22} for more detail. Making these changes, we can take $C_1=17.418$ (computed as in Lemma 2.6 of \cite{J_Y_arxiv}) and $C_2=5.272$. \footnote{$C_2$ was computed using $\{ k, \mu, \alpha, \delta, d \} = \{1, 1.2362, 0.2419, 0.3025, 0.3485 \}$ in \cite[Thm.~1.1]{K_L_N_2018}.}

For interest, the smallest feasible interval result from this method is consecutive cubes. Taking $m=3$ and $\alpha=1/3+10^{-10}$, (\ref{powers-condition}) holds for $x\geq \exp(\exp(33.990))$. This result does hold for all such $x$, despite having only considered half odd integer $x$. The result for those $x$ would imply primes in $(x,X+3X^{2/3}]$ for $X=[x+0.5]+0.5$ and any $x$ in the admissible range. The upper endpoint is always less than $x+3x^\frac{2}{3}+3x^\frac{1}{3}+1$, therefore, we have primes between $n^3$ and $(n+1)^3$ for all $n\geq \exp(\exp(32.892))$. 

An interesting aspect of the result for cubes is that it only uses $\nu_2(T)$ in the zero-free region. We find that $\nu_1(T)$ is a better estimate than $\nu_2(T)$ over $T\leq e^{54594.17..}=\lambda$, and thus becomes useful when we consider larger $m$, as (\ref{powers-condition}) will hold over smaller $x$, $\alpha$, and hence $T$. Because of how $\nu(T)$ is defined, the LHS of (\ref{powers-condition}) decreases to a local minimum at $\lambda^{1/\alpha}$ as $x$ decreases, then increases as the zero-free region switches from using $\nu_2(T)$ to $\nu_1(T)$. If this minimum is still positive, the smallest $x$ for which (\ref{powers-condition}) holds will be a function of $\nu_1(T)$. Therefore, to solve (\ref{powers-condition}) for large $m$, we need an $\alpha>1/m$ for which (\ref{powers-condition}) is positive at $x= \lambda^{1/\alpha}$. Moreover, we want the largest such $\alpha$, as this will maximise the LHS of (\ref{powers-condition}).

We could now find the smallest $m$ for which (\ref{powers-condition}) holds for all $x\geq 1$. However, the intervals in \cite{CH_L_corr-arX} can be smaller than an $m^{\text{th}}$-powers interval for small $x$. This means that (\ref{powers-condition}) need only hold for $x\geq x_0$ if the intervals in \cite{CH_L_corr-arX} verify the $m^{\text{th}}$-powers interval for $x< x_0$. The smallest $m$ for which this was found to work was $m=155$. From Table \ref{table_K}, we can take $K=3.4747$ over $\log x\geq 1000$, and with $\alpha = 0.0080146$ we find that (\ref{powers-condition}) holds for all $\log x\geq e^{4810}$. The intervals in \cite{CH_L_corr-arX} for $x\geq 4\cdot 10^{18}$ and $x\geq e^{1200}$ are smaller than that of consecutive $155^{\text{th}}$ powers for $4\cdot 10^{18} \leq x\leq e^{4850}$, by solving $x\left(1-\Delta^{-1}\right)^{-1} \leq x+mx^{1-1/m}$ for $x$. The computation in \cite{O_H_P_14} verifies the interval for the remaining $x\leq 4\cdot 10^{18}$. Thus, we can say there is at least one prime in $(n^{155}$, $(n+1)^{155})$ for all $n\geq 1$.

\section{Discussion}\label{discussion}

Theorem \ref{powers} is largely determined by the estimates for the zero-free region, zero-counting function, zero-density function, and error term in the truncated Riemann--von Mangoldt explicit formula. Although, some of these estimates are more influential than others. Most notably, a smaller constant in the zero-free region is more likely to affect the results than feasible asymptotic improvements in the zero-density estimate.

Reducing the constants in Kadiri, Lumley, and Ng’s zero-density estimate (\ref{zd_K}) does affect Theorem \ref{powers}, albeit less so for smaller powers. However, to widen the range for which we have primes between consecutive cubes, a smaller power of $T$ in (\ref{zd_K}) would be needed. Simoni\v{c}'s estimate in (\ref{zd_S}) has just this, so it would have been possible to split the second integral in (\ref{sum_zeros}) to incorporate (\ref{zd_S}) over some range of $\sigma\in \left[ 1/2,37/58 \right]$. However, not even $N(\sigma,T)=0$ in this range would have made a difference to Theorem \ref{powers}. This is owing to the choice of $\sigma_1$ in (\ref{powers-condition}): we took $\sigma_1<1-1/m$ to make $F(x)$ of (\ref{sum_fns}) decreasing in the long-run. This makes the terms from the trivial estimate and Simoni\v{c}'s estimate negligible compared to the main term. Therefore, a better zero-density estimate will only be useful for this method if it can be used for $\sigma$ up to the zero-free region.

The zero-free regions are arguably the most influential ingredients in the proof. A smaller constant in either would affect Theorem \ref{powers}. For example, if it were possible to take $c=50$ in (\ref{zerofree_F}), it would give primes between consecutive $150^{\text{th}}$ powers.

Using the explicit version of Goldston's estimate in place of (\ref{Dudek_psi}) affected both Theorem \ref{powers} and the range for consecutive cubes. This suggests it would be worth refining this estimate. The author and Johnston \cite{CH_DJ_arxiv} recently worked on an explicit version of an estimate from Wolke \cite{Wolke_1983} and Ramar{\'e} \cite{Ramare_16_Perron}, which looks likely to improve Theorem \ref{powers}. Theorem 1.2 of \cite{CH_DJ_arxiv} would allow us to take $E(x,T)= O(x/T)$ in (\ref{trunc_vonM}) for $\log x \leq T\leq \sqrt{x}$.

The tactics to cover small $x$ are another important aspect of the result. The intervals in \cite{CH_L_corr-arX} were used to verify the $m^{\text{th}}$ powers interval over small $x$, and allowed a much smaller value for $m$ than if only the condition of (\ref{powers-condition}) were used. If the results of \cite{CH_L_corr-arX} were not as strong, a combination of intervals could have been used, to differentially cover smaller values of $x$. A good option for some mid-range of $x$ would be intervals of the form \begin{equation}\label{log-interval}
\left( x,x+\frac{C_k x}{\log^k x}\right],
\end{equation}
which contain a prime for all $x\geq x_1$, given any positive integer $k$, and $C_k>0$ determined by $k$ and $x_1$. These are implied by PNT estimates of the form
\begin{align} \label{T_P}
\frac{|\psi(x)-x|}{x} \leq A \left( \frac{\log x}{R}\right)^{B} \exp\left(-C \sqrt{\frac{\log x}{R}} \right),
\end{align}
with positive constants $A$, $B$, $C$, and $R$, given explicitly in \cite[Thm.~11]{R_S_62} and \cite[Thm. 1]{P_T_2021}, among others. See, for example, Corollary 5.5 in \cite{Dusart_18}. Say (\ref{powers-condition}) held for all $x\geq x_0$. Then, adjusting $k$ as needed and starting at $x=x_0$, intervals of the form (\ref{log-interval}) could incrementally verify an $m^{\text{th}}$-powers interval for $x_1\leq x\leq x_0$. This method was not needed in the present work, however, because the range covered by the intervals deduced from (\ref{T_P}) did not extend past that of the intervals of \cite{CH_L_corr-arX}.


\subsection*{Acknowledgements}
Many thanks to my supervisor, Tim Trudgian, for his guidance and advice throughout the making of this paper. Thanks also to Thomas Bloom and Aleks Simoni\v{c}, for our discussions on the problem; to DJ, for his insights; to Andrew, for speedy computations; and to the voices of reason and wisdom in 114.

\clearpage

\bibliographystyle{IEEEtranSN}
\bibliography{references_LR}

\end{document}